\documentclass[letterpaper, 10 pt, conference]{ieeeconf}  
\usepackage[utf8]{inputenc} 
\usepackage[T1]{fontenc}    
\usepackage{hyperref}       
\usepackage{url}            
\usepackage{booktabs}       
\usepackage{amsfonts}       
\usepackage{nicefrac}       
\usepackage{microtype}      
\usepackage{lipsum}
\usepackage{graphicx}       
\graphicspath{{media/}}     
\usepackage{algorithm}%
\usepackage{cite}
\usepackage{subfigure}
\usepackage{wrapfig}
\usepackage{xcolor}
\usepackage{amssymb}
\usepackage{amsmath}
\usepackage{tcolorbox}

\usepackage{amsthm}
\newtheorem{assumption}{Assumption}
\newtheorem{dfn}{Definition}

\newtheorem{thm}{Theorem}
\newtheorem{lem}{Lemma}
\newtheorem{prop}{Proposition}
\newtheorem{rem}{Remark}
\usepackage{algpseudocode}
\usepackage{float}
\usepackage{algobox}
\makeatother
\usepackage{ifthen}
\newboolean{showcomments}
\setboolean{showcomments}{true}
\usepackage{color}
\usepackage{todonotes}

\definecolor{bleudefrance}{rgb}{0.19, 0.55, 0.91}
\definecolor{ao(english)}{rgb}{0.0, 0.5, 0.0}

\newcommand{\addcite}[0]{\ifthenelse{\boolean{showcomments}}
{\textcolor{purple}{(add cite(s)) }}{}}%

\newcommand{\addref}[0]{\ifthenelse{\boolean{showcomments}}
{\textcolor{purple}{(add ref) }}{}}%

\newcommand{\enrique}[1]{  \ifthenelse{\boolean{showcomments}}
{\todo[inline,color=bleudefrance]{Enrique: #1}}{}}
\newcommand{\agustin}[1]{  \ifthenelse{\boolean{showcomments}}
{\todo[inline,color=green!80!black]{Agustin: #1}}{}}
\newcommand{\rene}[1]{  \ifthenelse{\boolean{showcomments}}
{\todo[inline,color=cyan]{Ren\'e: #1}}{}}
\newcommand{\emmargin}[1]{\ifthenelse{\boolean{showcomments}}{\marginpar{\color{bleudefrance}\tiny EM: #1}}{}}
\newcommand{\hancheng}[1]{  \ifthenelse{\boolean{showcomments}}
{\todo[inline,color=orange]{Hancheng: #1}}{}}
\newcommand{\ziqing}[1]{  \ifthenelse{\boolean{showcomments}}
{\todo[inline,color=red]{Ziqing: #1}}{}}
\newcommand{\salma}[1]{  \ifthenelse{\boolean{showcomments}}
{\todo[inline,color=yellow]{Salma: #1}}{}}
\newcommand{\zxmargin}[1]{\ifthenelse{\boolean{showcomments}}{\marginpar{\color{purple}\tiny ZX: #1}}{}}
\newcommand{\stmargin}[1]{\ifthenelse{\boolean{showcomments}}{\marginpar{\color{red}\tiny ST: #1}}{}}

\newcommand{\hl}[1]{\ifthenelse{\boolean{showcomments}}
{\textcolor{red}{#1}}{#1}}

\newboolean{showedits}
\setboolean{showedits}{false}
\usepackage[commandnameprefix=always,markup=underlined]{changes}
\definechangesauthor[color=bleudefrance]{EM}
\newcommand{\aem}[1]{
\ifthenelse{\boolean{showedits}}
{\added[id=EM]{#1}}
{\!#1\hspace{-4.75pt}}
}
\newcommand{\repem}[2]{
\ifthenelse{\boolean{showedits}}
{\replaced[id=EM]{#1}{#2}}
{\!#1\hspace{-4.75pt}}
}
\newcommand{\dem}[1]{
\ifthenelse{\boolean{showedits}}
{\deleted[id=EM]{#1}}
{}
}

\newif \iffinal
\finalfalse
\iffinal

\newcommand{\WJcomments}[1]{{}}
\newcommand{\oldText}[1]{}
\newcommand{\toAppendix}[1]{}
\else
\newcommand{\SJmodified}[1]{{\color{blue} #1}}
\newcommand{\SJcomments}[1]{{\SJmodified{Shijie commented: #1}}}
\newcommand{\toAppendix}[1]{#1}
\newcommand{\oldText}[1]{#1}
\newcommand{\bx}{\mathbf{x}}
\newcommand{\bu}{\mathbf{u}}

\fi
\IEEEoverridecommandlockouts                              
\allowdisplaybreaks

\title{\LARGE \bf
Data-Driven MPC with Adaptively Sampled Non-Expert Demonstrations: Performance Guarantees and Sample Complexity
}

\author{
Shijie Pan$^*$,
Agustin Castellano and
Enrique Mallada%
\thanks{Shijie Pan,
Agustin Castellano and 
Enrique Mallada are with both Department of Electrical and Computer Engineering and Data Science and AI Institute, Johns Hopkins University, Baltimore, MD, USA. Emails: \{span34, acaste11, mallada\}@jhu.edu. $^*$Corresponding author: Shijie Pan}
\thanks{This work was supported by the NSF Global Centers program under Grant No.~2330450 and by the DOE Office of Science (ASCR) under Award No.~826565.}
}

\begin{document}

\maketitle

\thispagestyle{empty}
\pagestyle{empty}

\begin{abstract}
We study data-driven policy construction for dynamical systems using trajectories generated by surrogate finite-horizon Model Predictive Control (MPC), with the goal of approximating an infinite-horizon optimal control policy. We interpret these trajectories as non-expert demonstrations and propose a memory-based, nonparametric policy that constructs a Lipschitz-regularized upper envelope of the finite-horizon surrogate value function, which serves as an explicit upper bound on the value of the resulting policy, thereby providing a computable performance certificate while bypassing explicit optimization at runtime. We establish relative optimality guarantees with respect to the infinite-horizon optimal value function and derive sufficient MPC horizon conditions, together with sample complexity bounds, for achieving a prescribed relative error. Numerical experiments on a rocket landing task validate the theoretical predictions and demonstrate near-optimal performance.

\end{abstract}

\section{INTRODUCTION}
Model Predictive Control (MPC) has become an important tool for sequential decision-making problems, with applications in robotics \cite{mayne2014model}, autonomous racing \cite{rosolia2017autonomous}, and rocket landing \cite{jang2024convex}. MPC operates in a receding-horizon manner, where an infinite-horizon (or long horizon) optimal control problem is approximated by repeatedly solving a finite-horizon problem online. Achieving high performance typically requires a long prediction horizon to accurately approximate the infinite-horizon objective. However, in many real-time scenarios, control inputs must be computed under strict latency constraints, often at millisecond or even sub-millisecond time scales. This significantly increases online computational burden, often exceeding the real-time capabilities of CPUs, GPUs, or embedded processors \cite{jerez2014embedded}. As a result, MPC faces a fundamental time–accuracy tradeoff: long horizons improve performance but are computationally prohibitive, while shorter horizons are tractable but may lead to large optimality gaps \cite[Section~10.3]{boyd2004convex}. This motivates the development of efficient methods for computing near-optimal control policies in real~time.

A common {data-driven} approach is to learn {a} mapping $\pi(\bx)$ from the initial state $\bx$ to the optimal control $\bu^\star(\bx)$ via regression. 
Under specific structural assumptions, such as linear dynamics and quadratic stage costs, the MPC problem admits an analytical solution, leading to the framework of explicit MPC \cite{alessio2009survey}, where the optimal control policy is represented as a piecewise affine function of the state \cite{bemporad2002explicit}. 
For more general MPC problems where such analytical solutions are unavailable, { data-driven} approaches have been widely explored to {learn the} policy mapping. 
Deep neural networks (DNNs) are commonly used for this purpose, a paradigm often referred to as deep MPC \cite{chen2018approximating,cao2020deep,maddalena2020neural}. 
Beyond DNNs, other regression-based methods have also been investigated, including set-membership approximation \cite{canale2009set}, kernel regression \cite{huang2023robust}, Gaussian processes \cite{sasaki2018explicit}, and decision trees \cite{ren2025exact}. 
Although these approaches enable fast online implementation, they generally lack a clear characterization of the sample complexity required to achieve a prescribed performance guarantee, nor provide guarantees on monotonic improvement of policy performance {as more} data is collected.

As such, theoretical guarantees for data-driven MPC remain limited. For example, Hertneck et al. \cite{hertneck2018learning} provide Hoeffding-based probabilistic guarantees on the policy accuracy gap solely applicable to sampled trajectories under stability assumptions, while Tokmak et al. \cite{tokmak2025automatic} establish deterministic bounds on the policy gap and associated sample complexity guarantees under RKHS and geometric regularity assumptions on the policy mapping. However, these results rely on strong assumptions on the policy class.

In this work, we focus on two fundamental questions in (data-driven) MPC, related to the optimization horizon $N$ of the receding horizon controller.
\begin{enumerate}
    \item How does $N$ influence the performance of the receding horizon controller?
    \item If we use solutions from the receding horizon controller to build a data-driven policy, what can we say about its performance as a function of $N$?
\end{enumerate}

Many works have addressed the first question, both in terms of performance \cite{grune2009analysis, grune2008infinite} and stability \cite{braun2012predictive, grune2012nmpc, jadbabaie2005stability}, but usually requiring strong assumptions such as a Bellman inequality \cite{sontag1999control, grimm2005model}
. The second question is of great interest, and to the best of our knowledge hasn't been addressed before. In this paper, we establish lower bounds on $N$ that enable a particular family of policies---built with data from the receding horizon controller---to achieve a desired performance. Our work builds upon the recent work of Castellano et al.\cite{castellano2025data}, where the authors proposed a nonparametric data-driven policy that is greedy with respect to an upper bound of the optimal value function. Their method, however, relied on solutions to the {full-length} optimization problem---\emph{expert demonstrations}---while in this work we use solutions from the receding horizon controller---i.e. \emph{non-expert demonstrations}. In particular, we leverage data from the receding horizon controller (RHC) with horizon $N$ to construct a similar nonparametric policy with performance guarantees. Specifically, we make the following contributions:
\begin{enumerate}
    \item We derive lower bounds on the horizon length $N$ of the RHC to achieve a desired level of performance.
    \item We propose a nonparametric policy that uses trajectories from the RHC, and that is greedy with respect to a computable upper bound of its value function.
    \item Under mild assumptions, we establish a lower bound on $N$ ensuring this data-driven policy outperforms the  RHC's upper bound.
    \item We derive sample complexity results for our policy to achieve a desired performance.
\end{enumerate}

The rest of the paper is organized as follows. Section \ref{sec:prelim} establishes preliminaries on MPC, our overall goal, and lower bounds on the horizon of the receding horizon controller to get a prescribed performance. Section \ref{MPC} introduces our data-driven policy framework, and our two main results: conditions that make our policy \emph{improving} with respect to the RHC's upper bound, and the sample complexity needed to achieve a desired performance. Experiments in Section \ref{sec:experiments} empirically validate our framework.

\section{Preliminaries}\label{sec:prelim}
\subsection{Infinite-Horizon optimal control problem}
In this section, we introduce the system model and the optimal control formulation used throughout the paper. Consider a discrete-time invariant system,
\begin{align}
\bx_{t+1}=f(\bx_t,\bu_t),\,\bx_t\in\mathbb{X},\,\bu_t\in\mathbb{U},\,t \ge 0.
\label{sys}
\end{align}
where $\mathbb{X}\subset \mathbb{R}^{d_1}$ and $\mathbb{U}\subset \mathbb{R}^{d_2}$ are compact sets.
 The optimal control problem can be cast as the following 
optimization problem in $\left(\mathbb{R}^{d_2}\right)^{\infty}$. 
\begin{equation}
\begin{aligned}
J(\bx_0)=\min_{\{\bu_{t},\,t\geq0\}} \quad
& \sum_{t=0}^{\infty} \gamma^tl(\bx_t,\bu_t) \\
\text{s.t.}\quad
& \bx_{t+1} = f(\bx_t,\bu_t), \quad t \ge 0,\\
& \bu_t\in\mathbb{U},\qquad t \ge 0,\\
& \bx_0\in\mathbb{X} \text{ is given.}
\label{infhori}
\end{aligned}
\end{equation}
In \eqref{infhori}, $\gamma\in(0,1)$. Generally, infinite-horizon problems like \eqref{infhori} can be addressed in two ways: 
\begin{itemize}
 \item Directly finding the optimal value function $J$ via fixed point iteration (e.g. dynamic programming or RL). 
 \item Approximating \eqref{infhori} with a finite-horizon surrogate as in model predictive control (MPC).
\end{itemize}
In this work, we adopt the latter approach. The following assumptions are used throughout the paper.
\begin{assumption}[Lipschitz continuity]
System \eqref{sys} is Lipschitz continuous with respect to states and controls, that is $\forall \bx, \hat{\bx}\in\mathbb{X}$ and $\forall \bu, \hat{\bu}\in\mathbb{U}$ we have:
\begin{align*}
\|f(\bx,\bu)-f(\hat{\bx},\hat{\bu})\|_{\mathbb{X}}\leq L_{f}\|\bx-\hat{\bx}\|_{\mathbb{X}}+L_{u}\|\bu-\hat{\bu}\|_{\mathbb{U}},
\end{align*}
where $\|\cdot\|_{\mathbb{X}}$ and $\|\cdot\|_{\mathbb{U}}$ are norms defined in the state space $\mathbb{X}$ and action space $\mathbb{U}$, respectively. 
\label{a50}
\end{assumption}
\begin{assumption}[Non-negative cost]
In \eqref{infhori}, stage costs satisfy $l(\bx,\bu)\geq0,\,\forall \bx\in\mathbb{X},\,\bu\in\mathbb{U}$.\label{a501}
\end{assumption}
\begin{assumption}[Value function decay]\label{a5}
Optimal trajectories under \eqref{infhori} satisfy $J(\bx_0)\geq J(\bx_t),\,t\ge 0$.
\end{assumption}


\begin{assumption}[$\mathbb{X}$ regularity]
 $\mathbb{X}$ is a compact robust positively invariant set.
\label{a53}
\end{assumption}

We make the following remarks regarding these assumptions.
\begin{rem}
Assumption~\ref{a5} ensures that the value function decays sufficiently fast along optimal trajectories, despite discounting. This would be generally satisfied for tracking/stabilization problems where the convergence rate is faster than the discounting rate $\gamma$.
\end{rem}
\begin{rem}[Restriction on $\mathbb{X}$]
In this paper, we assume that $\mathbb{X}$ is a robust positively invariant set (Assumption~\ref{a53}) to focus on the optimality analysis within the limited page budget. By relaxing the requirement of expert demonstrations in \cite[Proposition~2]{castellano2025data}, this direction can be incorporated into our analysis as future work. Under this assumption, to simplify the notation, we impose the restriction only on the initial state in all MPC formulations \eqref{infhori}, \eqref{infhorib}, and \eqref{approx}.
\end{rem}
\subsection{Goal of the paper}
The goal of data-driven MPC is to learn a control policy $\pi$ from recorded state data and their corresponding  control inputs, over the state space $\mathbb{X}$. Given a policy $\pi:\mathbb{X}\to\mathbb{U}$, its associated value function is defined as:
\[
J_{\pi}(\bx) := \sum_{t=0}^{\infty} \gamma^t\, l(\bx_t, \pi(\bx_t)),
\]
where $\bx_{t+1} = f(\bx_t, \pi(\bx_t))$ with $\bx_0 = \bx$. 
The Bellman operator $\mathcal{T}^{\pi}$ associated with policy $\pi$ is given by
\[
\mathcal{T}^{\pi} \hat{J}(\bx) = l(\bx,\pi(\bx)) + \gamma \hat{J}\big(f(\bx,\pi(\bx))\big),\;\;\forall\hat{J}\in\mathcal{J},
\]
where $\mathcal{J} := \{ \hat{J} : \mathbb{X} \to \mathbb{R} \mid \hat{J} \text{ is bounded} \}$. The operator $\mathcal{T}^{\pi}$ is monotone and admits a unique fixed point \cite{bertsekas2011dynamic}. 
Further discussion is provided in Appendix I.

Our aim in this paper is for $J_\pi$ to satisfy the following relative error criterion with respect to the optimal value function $J(\bx)$:
\begin{align}
\sup_{\bx\in\mathbb{X}}\frac{J_{\pi}(\bx)-J(\bx)}{J(\bx)+\eta}\leq \mu,\label{MPCTag}
\end{align}
where $J_{\pi}(\bx)\geq J(\bx)$ always holds since $J(\bx)$ denotes the minimal value function of \eqref{infhorib} while $J_{\pi}(\bx)$ is the cost induced by the policy $\pi$. Without loss of generality, we assume tolerance
$\mu>0$. The constant {$\eta>0$} is introduced to ensure
the relative error is well-defined when $J(\bx)\to 0$.

Relative error is widely adopted in industrial optimization solvers (e.g., Gurobi \cite{gurobi_mipgap} and CPLEX \cite{cplex_epgap}). It is also an important performance metric in various application domains, including power systems \cite{sangrody2016initial} and autonomous racing \cite{guo2024time}. 
However, learning methods that explicitly optimize or provide guarantees in terms of relative error remain largely underexplored.
\subsection{Non-expert demonstrations}\label{s41}
To solve Problem \eqref{infhori}, it is customary to transform or approximate it by a finite horizon problem. We distinguish between two settings:
\begin{itemize}
    \item \textbf{Expert setting \cite[Assumption 2.5]{castellano2025nonparametric}:} the terminal cost is given by the exact value function $J(\bx_N)$:
    {\fontsize{9.5}{12}\selectfont{\begin{equation}
\begin{aligned}
J(\bx_0)=\min_{\{\bu_{0:N-1}\}} \quad
& \sum_{t=0}^{N-1} \gamma^tl(\bx_t,\bu_t) \;+\; \gamma^N J(\bx_{N}) \\
\text{s.t.}\quad
& \bx_{t+1} = f(\bx_t,\bu_t), \,t = 0,\ldots,N-1,\\
& \bu_t\in\mathbb{U},\,t = 0,\ldots,N-1,\\
& \bx_0\in\mathbb{X} \text{ is given.}
\label{infhorib}
\end{aligned}
\end{equation}}}
    \item \textbf{Non-expert setting:} the terminal cost $J(\bx_N)$ is approximated by a surrogate function $F(\bx_N)$:
    {\fontsize{9.5}{12}\selectfont{\begin{equation}
\begin{aligned}
J_N(\bx_0)=\min_{\{u_{0:N-1}\}} \quad
& \sum_{t=0}^{N-1} \gamma^tl(\bx_t,\bu_t) \;+\; \gamma^NF(\bx_{N}) \\
\text{s.t.}\quad
& \bx_{t+1} = f(\bx_t,\bu_t), \,t = 0,\ldots,N-1,\\
& \bu_t\in\mathbb{U},\,t = 0,\ldots,N-1,\\
& \bx_0\in\mathbb{X} \text{ is given.}
\label{approx}
\end{aligned}
\end{equation}}}
\end{itemize}

The expert setting requires solving the infinite-horizon problem to evaluate $J(\bx_N)$, which is generally impractical for data collection and control computation for a data-driven policy. 
Therefore, we focus on the non-expert setting. The challenge in this case is to find a suitable surrogate $F(\cdot)$.


Much attention has been given in the literature to the design of a terminal cost function $F(\cdot)$, particularly when stability is desired. For constrained LQR, it is customary to use a quadratic terminal cost $F(\bx_N) = \bx_N^\top P \bx_N$ where $P$ is the solution to the algebraic Riccati equation \cite{scokaert2002constrained}; this approach has been used for nonlinear systems too \cite{chen1998quasi}. The key idea, in general, is to design an $F(\cdot)$ that satisfies a one-step Bellman inequality \cite{jadbabaie2002unconstrained}.

In \eqref{approx}, we design a non-negative terminal cost $F(\bx_N)\geq 0$ that is always a lower approximate of $J(\bx_N)$ such that $F(\bx_N)\leq J(\bx_N),\,\forall \bx_N\in\mathbb{X}$. For any initial state $\bx_0 \in \mathbb{X}$, let $\bu_0^F(\bx_0)$ denote the first control input obtained by solving \eqref{approx}, $\bx_1^F(\bx_0)$ the corresponding predicted next state, and $\bx_N^F(\bx_0)$ the associated terminal state. Using this notation, we further make the following implicit assumptions on the terminal cost $F(\cdot)$.

\begin{assumption}[Stage-cost bounds]\label{a52}
Consider the approximate problem \eqref{approx}. For all $\bx_0\in\mathbb{X}$, there exists a constant $v>0$ such that
\begin{align}
l(\bx_0,\bu_0^F(\bx_0)) \ge v J_N(\bx_0).\label{scd}
\end{align}
Moreover, there exists a constant $C>0$ such that the terminal state $\bx_N^F(\bx_0)$ under \eqref{approx} satisfies
\begin{align}
J(\bx_N^F(\bx_0)) \le C\, J(\bx_0).\label{rbt}
\end{align}
\end{assumption}
Assumption \ref{a52} ensures that the stage cost at the initial state $l(\bx_0,\bu_0^F(\bx_0))$ is sufficiently large relative to the non-expert value function $J_N(\bx_0)$, 
while the true tail cost $J(\bx_N^F(\bx_0))$ in \eqref{approx} is bounded in terms of the initial expert value function $J(\bx_0)$. In the rocket landing experiment in Section \ref{sec:experiments} (nonlinear MPC with quadratic cost), we empirically show that the terminal cost can be chosen as $F \equiv 0$ for quadratic cost MPCs. Assumption~\ref{a52} holds when the stage cost $l$ and the non-expert objective $J_N$ exhibit the same order of growth. Further discussion is provided in Appendix~II.

The following lemma provides a simple but important observation.
Since the function $F(\cdot)$ is designed as a lower approximation of
the unknown tail cost $J(\cdot)$, replacing $J(\bx_N)$ with $F(\bx_N)$
in the truncated MPC problem results in a smaller (optimization) cost.
\begin{lem}[Value lower approximation]
Under Assumption \ref{a52}, $J_N(\bx)\leq J(\bx)$, $\forall\bx\in\mathbb{X}$.
\label{lem-2i}
\end{lem}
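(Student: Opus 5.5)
The plan is a direct comparison of the two finite-horizon problems \eqref{infhorib} and \eqref{approx}. The key observation is that they share the same decision variables $\bu_{0:N-1}$, the same dynamics and the same input constraints, so they have the same feasible set; they differ only in the terminal cost. The only ingredient needed is the design property $F(\bx_N)\le J(\bx_N)$ for all $\bx_N\in\mathbb{X}$, imposed when $F$ was constructed. The lemma attributes the result to Assumption~\ref{a52}, but that assumption plays no essential role beyond fixing the setting; robust positive invariance (Assumption~\ref{a53}) guarantees that every predicted terminal state $\bx_N$ lies in $\mathbb{X}$, where the inequality $F\le J$ is available.

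\textbf{Step 1: reduce the infinite-horizon problem to \eqref{infhorib}.} By the dynamic programming principle (Bellman's equation applied $N$ times, as discussed in Appendix~I), the optimal value of \eqref{infhori} equals the optimal value of \eqref{infhorib}. In other words, $J(\bx_0)$ is the minimum over $\bu_{0:N-1}$ of
\[
\sum_{t=0}^{N-1}\gamma^t l(\bx_t,\bu_t)+\gamma^N J(\bx_N).
\]

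\textbf{Step 2: compare costs at a common feasible sequence.} Fix $\bx_0\in\mathbb{X}$. Let $\bu^\star_{0:N-1}$ be a minimizer of \eqref{infhorib}; if the minimum is not attained, take an $\varepsilon$-optimal sequence and let $\varepsilon\to0$ at the end. Feed this same sequence into \eqref{approx}. It generates the same state trajectory $\bx_{0:N}^\star$, so by optimality of $J_N$,
\[
J_N(\bx_0)\le \sum_{t=0}^{N-1}\gamma^t l(\bx^\star_t,\bu^\star_t)+\gamma^N F(\bx^\star_N)
\le \sum_{t=0}^{N-1}\gamma^t l(\bx^\star_t,\bu^\star_t)+\gamma^N J(\bx^\star_N)=J(\bx_0).
\]
The second inequality uses $F\le J$ on $\mathbb{X}$ together with $\gamma^N>0$.

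The only delicate point is Step 1: justifying that \eqref{infhorib} genuinely represents $J$, including existence of minimizers. Compactness of $\mathbb{U}$ and Lipschitz continuity of $f$ support attainment, but the $\varepsilon$-optimal argument in Step 2 avoids the attainment question entirely. Alternatively, Step 1 can be bypassed: take any admissible infinite sequence for \eqref{infhori} and split its cost at time $N$. The tail from time $N$ onward is at least $\gamma^N J(\bx_N)\ge\gamma^N F(\bx_N)$, and the head is exactly the cost that \eqref{approx} assigns to the first $N$ inputs. Taking the infimum over sequences then gives $J_N(\bx_0)\le J(\bx_0)$.
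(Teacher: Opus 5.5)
Your proposal is correct and follows the paper's own justification: \eqref{infhorib} and \eqref{approx} share the same feasible input sequences and differ only in the terminal cost, with $F\le J$ pointwise on $\mathbb{X}$, so $J_N\le J$. The paper gives only that one-sentence argument; your $\varepsilon$-optimal version and the alternative tail-splitting argument make it rigorous, and you are right that only the design property $F\le J$ (not \eqref{scd} or \eqref{rbt}) is actually used.
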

Lemma \ref{lem-2i} shows that the non-expert optimization cost $J_N$ always underestimates the optimal control cost $J$ in \eqref{infhorib}. The cost $J_N$, however, should not be confused with the value $J_{\pi_\mathrm{MPC}}$ associated with applying the MPC policy $\pi_{\mathrm{MPC}}$ via \eqref{approx}. Nevertheless, the closer $J_N$ is to $J$, e.g., as $N$ grows, the better the quality of the resulting control $\bu_0^F$. This  property will be used in the next section.

The next result characterizes how large the MPC horizon $N$ should be for the truncated problem to provide a good relative approximation.
\begin{prop}[Multiplicative suboptimality-based sufficient MPC horizon]\label{lem-1}
Under Assumptions~\ref{a501}-\ref{a52}, consider the non-expert problem~\eqref{approx}, then
\begin{equation}
(1-\gamma^N(1+C))J(\bx)
\le J_N(\bx),\,\forall\bx\in\mathbb{X}.
\label{eq:relative-bound}
\end{equation}
Moreover, if $N\geq \frac{\log\left({1+C}\right)}{\log\left(\frac{1}{\gamma}\right)}$, the following $\delta$-Bellman inequality holds:
\begin{equation}
\delta\, l(\bx_0,\bu_0^F(\bx_0))
\le
J_N(\bx_0)-\gamma J_N\!\bigl(f(\bx_0,\bu_0^F(\bx_0))\bigr),\, \forall \bx_0\in\mathbb{X},
\label{deltaclf}
\end{equation}
where,
\begin{equation}
\delta \le 1-\frac{C\gamma^N}{v(1-\gamma^N(1+C))}.
\label{eq:delta-bound}
\end{equation}
\end{prop}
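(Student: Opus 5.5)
The plan has three steps: bound $J$ above by $J_N$ plus a tail term, absorb that tail via \eqref{rbt}, and then use optimality of the MPC solution to get a one-step decrease of $J_N$.

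\textbf{Relative bound \eqref{eq:relative-bound}.} Fix $\bx_0=\bx$ and let $(\bu_t^F,\bx_t^F)$ be the optimal input and state sequence of \eqref{approx}. The first $N$ inputs of this sequence are feasible for the expert problem \eqref{infhorib}, whose optimal value is $J(\bx)$. Hence
\[
J(\bx)\le \sum_{t=0}^{N-1}\gamma^t l(\bx_t^F,\bu_t^F)+\gamma^N J(\bx_N^F) = J_N(\bx)-\gamma^N F(\bx_N^F)+\gamma^N J(\bx_N^F).
\]
Since $F\ge 0$, we may drop the $-\gamma^N F$ term. By \eqref{rbt}, $J(\bx_N^F)\le CJ(\bx)$, which gives $J(\bx)\le J_N(\bx)+\gamma^N C J(\bx)$, i.e. $(1-\gamma^N C)J(\bx)\le J_N(\bx)$. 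The stated factor $1-\gamma^N(1+C)$ is smaller because $J\ge0$ by Assumption~\ref{a501}. It would also arise if one keeps the slack by bounding $\gamma^N(J-F)\le \gamma^N(1+C)J$ using $0\le F\le J$. In either case \eqref{eq:relative-bound} follows.

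\textbf{$\delta$-Bellman inequality.} Let $\bx_1=\bx_1^F(\bx_0)$. The tail $\bu_{1:N-1}^F$ followed by any admissible $\bu_N$ is feasible for \eqref{approx} started at $\bx_1$. I would choose $\bu_N$ to be an optimal first input of \eqref{infhorib} at $\bx_N^F$, whose existence follows from compactness. This gives
\[
J_N(\bx_1)\le \sum_{t=1}^{N-1}\gamma^{t-1}l(\bx_t^F,\bu_t^F)+\gamma^{N-1}\bigl(l(\bx_N^F,\bu_N)+\gamma F(f(\bx_N^F,\bu_N))\bigr).
\]
Because $F\le J$, the last bracket is at most $J(\bx_N^F)$ by the Bellman equation for $J$. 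Multiplying by $\gamma$ and comparing with $J_N(\bx_0)$ yields
\[
J_N(\bx_0)-\gamma J_N(\bx_1)\ge l(\bx_0,\bu_0^F)-\gamma^N\bigl(J(\bx_N^F)-F(\bx_N^F)\bigr)\ge l(\bx_0,\bu_0^F)-\gamma^N C J(\bx_0),
\]
again using $F\ge 0$ and \eqref{rbt}.

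\textbf{Converting the error term (main obstacle).} The remaining step is to express $\gamma^N C J(\bx_0)$ in units of the stage cost, and this is where the horizon condition is needed. The condition $N\ge \log(1+C)/\log(1/\gamma)$ ensures $\gamma^N(1+C)\le 1$. Taking it with strict inequality, so that $1-\gamma^N(1+C)>0$ and we can divide by it, \eqref{eq:relative-bound} gives $J(\bx_0)\le J_N(\bx_0)/(1-\gamma^N(1+C))$. Combining this with \eqref{scd}, $J_N(\bx_0)\le l(\bx_0,\bu_0^F)/v$, we obtain
\[
J_N(\bx_0)-\gamma J_N(\bx_1)\ge \Bigl(1-\frac{C\gamma^N}{v(1-\gamma^N(1+C))}\Bigr)l(\bx_0,\bu_0^F).
\]
Since $l\ge0$, any $\delta$ satisfying \eqref{eq:delta-bound} then satisfies \eqref{deltaclf}.

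The delicate points are the boundary case of the horizon condition and making sure that $l\ge 0$ and $0\le F\le J$ are used in the right directions. The boundary case $N=\log(1+C)/\log(1/\gamma)$ gives $1-\gamma^N(1+C)=0$, so the bound \eqref{eq:delta-bound} is vacuous there and the argument must treat it separately.
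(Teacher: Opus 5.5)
Your proof is correct. The $\delta$-Bellman part matches the paper's argument: the shifted MPC sequence with the expert-optimal input appended, $F\le J$, the Bellman equation for $J$, dropping $F(\bx_N^F)\ge 0$, and then \eqref{rbt}, \eqref{eq:relative-bound} and \eqref{scd}.

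You take a different route for the relative bound \eqref{eq:relative-bound}. The paper fixes the expert minimizer $\bu^\star$ and splits $J(\bx_0)-J_N(\bx_0)$ into two terms:
\begin{itemize}
\item $(\mathrm{I})=\gamma^N J(\bx_N)$, which it bounds by $\gamma^N J(\bx_0)$ using the value-decay Assumption~\ref{a5};
\item $(\mathrm{II})=V_N(\bx_0,\bu^\star)-J_N(\bx_0)$, which it bounds by $C\gamma^N J(\bx_0)$ after discarding a $+\gamma^N J(\bx_N)$ term.
\end{itemize}
Together these give the factor $1+C$. You instead plug the MPC minimizer into the expert problem \eqref{infhorib}. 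In one line this gives the sharper $(1-C\gamma^N)J\le J_N$, without Assumption~\ref{a5}.

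The paper's decomposition is in fact your argument in disguise, loosened by one $\gamma^N J(\bx_0)$. The $\gamma^N J(\bx_N)$ it bounds in $(\mathrm{I})$ and the one it throws away in $(\mathrm{II})$ cancel exactly. Since your $\delta$-Bellman step never uses Assumption~\ref{a5} either, your proof shows the whole proposition holds without it. The only thing the paper's route buys is the constant $1+C$, which it reuses in Theorem~\ref{thm:pi-eval}; you recover that constant by the trivial weakening via $J\ge 0$.

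You also did not exploit one further payoff. Feeding $J(\bx_0)\le J_N(\bx_0)/(1-C\gamma^N)$ into your conversion step gives the larger admissible $\delta\le 1-\frac{C\gamma^N}{v(1-C\gamma^N)}$. This needs only $C\gamma^N<1$, so it stays well defined at the boundary $\gamma^N(1+C)=1$ that you flagged. At that boundary \eqref{eq:delta-bound} itself divides by zero, as the paper acknowledges by requiring strict inequality.
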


The proof of Proposition \ref{lem-1} is shown in Appendix III.
\section{Data-driven Policy}
\label{MPC}
In this section we present our data-driven nonparametric policy, and provide explicit theoretical guarantees on the number of data points required to achieve $\mu$-relative optimality \eqref{MPCTag}.
\subsection{Non-expert nonparametric policy}
 Following the approach in \cite[Section 4]{castellano2025data}, we construct a nonparametric policy $\pi_{\mathcal{D}}$ by sampling initial states from the state space $\mathbb{X}$ and repeatedly solving \eqref{approx} to generate trajectories. 
In contrast to \cite{castellano2025data}, which assumes access to expert trajectories \eqref{infhorib}, our approach relies solely on non-expert trajectories generated by finite-horizon MPC \eqref{approx}, and thus does not require expert information. 

Let $k$ denote the trajectory index. For each sampled initial state, we generate a non-expert trajectory $\tau_k:=\{\tilde\bx_{t,k},\tilde\bu_{t,k}\}$ by repeatedly solving \eqref{approx}. The resulting tuples are stored in the dataset
\begin{align}
\mathcal{D} \triangleq \bigcup_k\{(\tilde\bx_{t,k},\tilde\bu_{t,k},\tilde{\mathbf{J}}_{t,k})\}_{t\ge0},
\label{dataset}
\end{align}
where $\tilde\bu_{t,k} = \bu_0^F(\tilde\bx_{t,k})$ and $\tilde{\mathbf{J}}_{t,k} = J_N(\tilde\bx_{t,k})$, with $\bu_0^F(\cdot)$ denoting the first optimal control input of \eqref{approx}.
\begin{dfn}[Non-expert nonparametric policy \cite{castellano2025data}]\label{def1}
Given a dataset $\mathcal{D}$ as in~\eqref{dataset} and a parameter
$\lambda > 0$, define,
\[
\pi_{\mathcal{D}} : \mathbb{X} \to \mathbb{U},\,
\pi_{\mathcal{D}}(\bx) = \tilde\bu_\ell(\bx),
\]
where $\ell(\bx) = \arg\min_{1\le i\le |\mathcal{D}|}
\left\{ \tilde{\mathbf{J}}_i + \lambda \|\bx-\tilde\bx_i\|_{\mathbb{X}} \right\}$.
If multiple minimizers exist, one of them is selected arbitrarily.
\end{dfn}
A suitable choice of $\lambda$ will be provided later in Theorem \ref{thm:pi-eval}. The proposed nonparametric policy $\pi_{\mathcal{D}}$ can be interpreted as a value-aware nearest neighbor based on the regularized score $\tilde{\mathbf{J}}_i+\lambda\|\bx-\tilde\bx_i\|_{\mathbb{X}}$, where the first control associated with the selected sample is applied to the system. The sampling mechanism used to collect this data will be described later on in Algorithm \ref{alg:1}.
\begin{figure*}[htb]\label{fig:flowchart}
    \centering
    \includegraphics[width=0.7\linewidth]{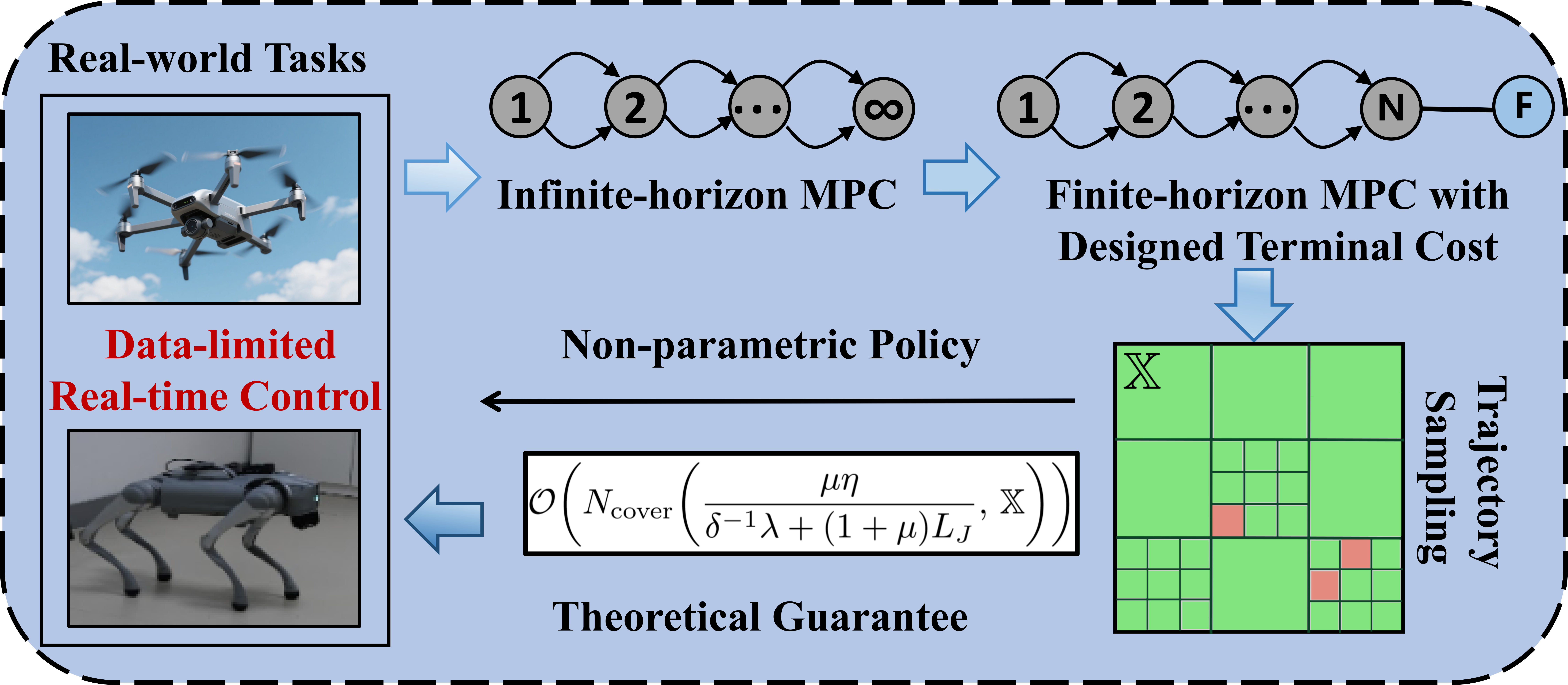}
    \caption{Non-expert data-driven MPC flowchart. An infinite horizon MPC problem is approximated via a finite horizon problem with a terminal cost function $F(\cdot)$. Trajectory solutions from this approximate problem are used to define our data-driven policy, which enjoys---with sufficient coverage---guarantees on performance.}
    \vspace{-10pt}
\end{figure*}
\subsection{Policy evaluation bound}
We now derive an upper bound for $J_{\pi_\mathcal{D}}(\bx)$ based on sampled non-expert values $\tilde{\mathbf{J}}_i$ contained in the dataset $\mathcal{D}$. First, we introduce the following assumption on the Lipschitz continuity of $J_N(\bx)$ and the stage cost $l(\bx,\bu)$ over state space $\mathbb{X}$.
\begin{assumption}[Lipschitz continuity]\label{a54}
For system \eqref{sys}, $J_N$ is Lipschitz continuous, i.e.,
\[
|J_N(\bx)-J_N(\hat{\bx})|\leq L_J\|\bx-\hat{\bx}\|_{\mathbb{X}},\,\forall\bx,\hat{\bx}\in\mathbb{X}.
\]
Similarly, the stage cost $l(\bx,\bu)$ is Lipschitz continuous in $\bx$, uniformly over $\bu \in \mathbb{U}$, i.e.,
\[
\sup_{\bu\in\mathbb{U}}|l(\bx,\bu)-l(\hat{\bx},\bu)|
\le
L_l\|\bx-\hat{\bx}\|_{\mathbb{X}},\,\forall\bx,\hat{\bx}\in\mathbb{X},
\]
and we assume there exists $\kappa>0$ such that $L_l\le \kappa L_J$.
\end{assumption}
From Assumption \ref{a54}, for any $\bx,\bx'\in\mathbb{X}$, we know,
$$
J_N(\bx) \leq J_N(\bx')  + L_J \|\bx-\bx'\|_{\mathbb{X}}\;.
$$
We then establish a global upper bound for $J_{\pi_\mathcal{D}}(\cdot)$, based on the data in $\mathcal{D}$.
\begin{dfn}\label{def2}
 For given $\lambda>0$ and $\delta\in(0,1)$, define:
    \begin{align*}
J_{\mathrm{ub}}^{\lambda}: \mathbb{X}\to\mathbb{R}:  J_{\mathrm{ub}}^{\lambda}(\bx) \triangleq \delta^{-1}\min_{1\leq i \leq |\mathcal{D}|}\left\{\tilde{\mathbf{J}}_i+ \lambda \|\bx-\tilde\bx_i\|_{\mathbb{X}}\right\},
    \end{align*}
where $\delta$ is the coefficient of Bellman inequality described in Proposition \ref{lem-1}.
\end{dfn}
\begin{assumption}[Dataset consistency]
\label{ass:data_consistency}
For any $(\tilde\bx_i, \tilde\bu_i, \tilde{\mathbf{J}}_i) \in \mathcal{D}$, there exists 
$(\tilde\bx_j, \tilde\bu_j, \tilde{\mathbf{J}}_j) \in \mathcal{D}$ such that
$\tilde\bx_j = f(\tilde\bx_i, \tilde\bu_i)$.
\end{assumption}
A preparatory lemma on functional monotonicity is presented below to facilitate the proof of Theorem~\ref{thm2}.
\begin{lem}[Policy value bound \cite{bertsekas2011dynamic}]
If $\hat{J} : \mathbb{X} \to \mathbb{R}$ satisfies
$(T^{\pi} \hat{J})(\bx) \le \hat{J}(\bx)$ for all $\bx \in \mathbb{X}$,
then $J_{\pi}(\bx) \le \hat{J}(\bx)$ for all $\bx \in \mathbb{X}$.\label{lem4}
\end{lem}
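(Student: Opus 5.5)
The statement to prove is Lemma~\ref{lem4}: if $\mathcal{T}^{\pi}\hat{J}\le \hat{J}$ pointwise, then $J_\pi\le \hat{J}$. The plan is to iterate the operator $\mathcal{T}^{\pi}$ and use two of its properties, monotonicity and the fact that $J_\pi$ is its fixed point obtained as the limit of the iterates.

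\textbf{Step 1 (monotone decreasing iterates).} Monotonicity of $\mathcal{T}^{\pi}$ means that $J_1\le J_2$ pointwise implies $\mathcal{T}^{\pi}J_1\le \mathcal{T}^{\pi}J_2$. This is immediate from the definition, since $\gamma>0$ and the term $l(\bx,\pi(\bx))$ does not depend on the function being operated on. Applying monotonicity repeatedly to the hypothesis $\mathcal{T}^{\pi}\hat J\le \hat J$ gives, by induction,
\[
(\mathcal{T}^{\pi})^{k}\hat J\le (\mathcal{T}^{\pi})^{k-1}\hat J\le\cdots\le \hat J
\]
for every $k\ge1$.

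\textbf{Step 2 (unrolling the iterates).} Writing $\bx_t$ for the closed-loop trajectory under $\pi$ started at $\bx_0=\bx$, the $k$-fold iterate expands explicitly as
\[
(\mathcal{T}^{\pi})^{k}\hat J(\bx)=\sum_{t=0}^{k-1}\gamma^t l(\bx_t,\pi(\bx_t))+\gamma^k\hat J(\bx_k).
\]
Combined with Step 1, this yields
\[
\sum_{t=0}^{k-1}\gamma^t l(\bx_t,\pi(\bx_t))\le \hat J(\bx)-\gamma^k\hat J(\bx_k).
\]

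\textbf{Step 3 (passing to the limit).} I now let $k\to\infty$. By Assumption~\ref{a53}, the trajectory $\bx_t$ stays in $\mathbb{X}$. Since $\hat J\in\mathcal{J}$ is bounded, $\gamma^k\hat J(\bx_k)\to0$ because $\gamma<1$. The partial sums on the left are nondecreasing by Assumption~\ref{a501}, so they converge to $J_\pi(\bx)$. Hence $J_\pi(\bx)\le\hat J(\bx)$.

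An equivalent route is to note that $\mathcal{T}^{\pi}$ is a $\gamma$-contraction in the sup norm on $\mathcal{J}$. The decreasing iterates from Step 1 then converge to the unique fixed point $J_\pi$, which must therefore lie below $\hat J$.

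The only delicate point is the vanishing of the tail term $\gamma^k\hat J(\bx_k)$ in Step 3. This requires $\hat J$ to be bounded, or at least bounded below, along the trajectory. That is precisely why the lemma is posed over the class $\mathcal{J}$ of bounded functions and why invariance of $\mathbb{X}$ is needed. If $\hat J$ were only known to be bounded below, say by $0$, the argument still closes: $-\gamma^k\hat J(\bx_k)\le 0$, so the partial sums are bounded by $\hat J(\bx)$ directly.
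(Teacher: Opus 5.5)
Your proof is correct and follows essentially the route the paper relies on through its citation of Bertsekas and the properties in Appendix I: monotonicity gives $(\mathcal{T}^{\pi})^k\hat J\le\hat J$ for all $k$, and letting the iterates converge to $J_\pi$ yields $J_\pi\le\hat J$; your unrolling of the iterates simply proves that convergence directly, using only that $\hat J$ is bounded below. Your remark that $\hat J$ must be bounded (at least from below) is a genuinely necessary hypothesis: the lemma as stated for arbitrary real-valued $\hat J$ omits it, and the paper supplies it only implicitly by working in the class $\mathcal{J}$ of bounded functions.
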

Our main result shows that, with appropriate choices of $\lambda$ and the MPC horizon $N$,
$J_{\pi_\mathcal{D}}(\bx) \leq J_{\text{ub}}^\lambda(\bx)$.
\begin{thm}[Policy evaluation inequality]\label{thm:pi-eval}
Under Assumptions~\ref{a50}-\ref{ass:data_consistency}, suppose that $\gamma L_f < 1$.
Given $1>\delta > 0$, if $N$ is greater than or equal to the following:
\begin{align}
{\fontsize{9}{12}\selectfont{ 
\max\left\{\!\!\tfrac{\log\left(\!-\!\frac{1}{4(1+C)}\!+\!\frac{1}{2}\sqrt{\frac{1}{4(1+C)^2}\!+\!\frac{2v(1\!-\!\delta)}{C(1+C)}}\right)}{\log\left({\gamma}\right)}\!,\!\tfrac{\log\left({2(1+C)}\right)}{\log\left(\frac{1}{\gamma}\right)}\!\!\right\}\!,\label{conN}}}
\end{align}
where both $C$ and $v$ are constants from Assumption \ref{a52}, then \eqref{deltaclf} holds. Furthermore, for any $\lambda$ satisfying $\lambda \ge 
\frac{ \kappa
}{
\delta^{-1}(1 - \gamma L_f)
}
\,L_J$, we have 
$J_{\pi_\mathcal{D}}(\bx) \le J_{\mathrm{ub}}^\lambda(\bx),\, \forall \bx \in \mathbb{X}$. \label{thm2}
\end{thm}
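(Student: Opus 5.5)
The proof has two parts. First, the horizon condition \eqref{conN} yields a $\delta$-Bellman inequality \eqref{deltaclf} with the prescribed $\delta$. Second, Lemma~\ref{lem4} is applied with $\hat J=J_{\mathrm{ub}}^\lambda$ and $\pi=\pi_{\mathcal D}$.

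\textbf{Part 1: the horizon condition.} The second term in \eqref{conN} gives $\gamma^N\le \frac{1}{2(1+C)}$. This exceeds the requirement of Proposition~\ref{lem-1}, so \eqref{deltaclf} holds for every $\delta$ satisfying \eqref{eq:delta-bound}. It also gives $1-\gamma^N(1+C)\ge \tfrac12$.

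It remains to show that the given $\delta$ satisfies \eqref{eq:delta-bound}. Write $s=\gamma^N$. The condition $\delta\le 1-\frac{Cs}{v(1-s(1+C))}$ becomes the quadratic-type inequality $Cs\le v(1-\delta)(1-s(1+C))$. I expect the first term of \eqref{conN} to encode the root of a sufficient quadratic in $s$: the authors bound $1-s(1+C)$ from below using the second condition, and this produces the expression $-\frac{1}{4(1+C)}+\frac12\sqrt{\cdot}$. I will verify that $s$ below this root implies the inequality, which is routine algebra. Since $\log\gamma<0$, the inequality flips when passing to $N$.

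\textbf{Part 2: the Bellman-type inequality.} Fix $\bx$ and let $i=\ell(\bx)$, so $\pi_{\mathcal D}(\bx)=\tilde\bu_i$ and $J_{\mathrm{ub}}^\lambda(\bx)=\delta^{-1}(\tilde{\mathbf J}_i+\lambda\|\bx-\tilde\bx_i\|)$. By Assumption~\ref{ass:data_consistency} there is a $j$ with $\tilde\bx_j=f(\tilde\bx_i,\tilde\bu_i)$. Using index $j$ in the minimum gives
\[
J_{\mathrm{ub}}^\lambda(f(\bx,\tilde\bu_i))\le \delta^{-1}\bigl(J_N(\tilde\bx_j)+\lambda L_f\|\bx-\tilde\bx_i\|\bigr),
\]
where the Lipschitz bound of Assumption~\ref{a50} is applied with the same control. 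Assumption~\ref{a54} gives $l(\bx,\tilde\bu_i)\le l(\tilde\bx_i,\tilde\bu_i)+L_l\|\bx-\tilde\bx_i\|$.

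Combining these bounds,
\[
\mathcal T^{\pi_{\mathcal D}}J_{\mathrm{ub}}^\lambda(\bx)\le l(\tilde\bx_i,\tilde\bu_i)+\gamma\delta^{-1}J_N(\tilde\bx_j)+\bigl(L_l+\gamma\delta^{-1}\lambda L_f\bigr)\|\bx-\tilde\bx_i\|.
\]
The $\delta$-Bellman inequality \eqref{deltaclf} at $\tilde\bx_i$ gives $l(\tilde\bx_i,\tilde\bu_i)+\gamma\delta^{-1}J_N(\tilde\bx_j)\le \delta^{-1}J_N(\tilde\bx_i)=\delta^{-1}\tilde{\mathbf J}_i$. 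For the distance term, it suffices that $L_l+\gamma\delta^{-1}\lambda L_f\le \delta^{-1}\lambda$. Since $L_l\le\kappa L_J$, this follows from $\lambda\ge \frac{\kappa L_J}{\delta^{-1}(1-\gamma L_f)}$, which is exactly the hypothesis. The assumption $\gamma L_f<1$ makes this bound positive.

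Hence $\mathcal T^{\pi_{\mathcal D}}J_{\mathrm{ub}}^\lambda\le J_{\mathrm{ub}}^\lambda$. The function $J_{\mathrm{ub}}^\lambda$ is bounded because $\mathbb X$ is compact and the data values are finite. Lemma~\ref{lem4} then concludes the proof.

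\textbf{Main obstacle.} The hardest step is Part~1: reconciling the explicit root in \eqref{conN} with \eqref{eq:delta-bound}. If my guessed quadratic does not reproduce the stated root, I will instead show monotonicity. The map $s\mapsto \frac{Cs}{v(1-s(1+C))}$ is increasing on $s<\frac{1}{1+C}$. So it suffices to show that the root $s^\ast$ given by \eqref{conN} satisfies $\frac{Cs^\ast}{v(1-s^\ast(1+C))}\le 1-\delta$, possibly using $s^\ast\le \frac{1}{2(1+C)}$.
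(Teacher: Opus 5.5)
Your proposal is correct and follows the paper's proof essentially step for step. Part~2 is exactly the paper's Step~2: minimizing index $i$, consistency index $j$, Lipschitz bounds on $l$ and on $f$ with the same control, the $\delta$-Bellman inequality at $\tilde{\mathbf{x}}_i$, the resulting condition on $\lambda$, and Lemma~\ref{lem4}. Part~1 is the paper's Step~1, and the ``routine algebra'' there is the bound $\frac{1}{1-a}=1+\frac{a}{1-a}\le 1+2a$ for $a=s(1+C)\le\frac12$, which turns $\frac{Cs}{v(1-s(1+C))}\le 1-\delta$ into the quadratic $s+2(1+C)s^2\le \frac{v(1-\delta)}{C}$; its positive root is the argument of the logarithm in the first term of \eqref{conN}. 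Two cautions: lower-bounding the whole denominator by $\frac12$ would not reproduce this root, and your monotonicity fallback must be evaluated at $\min\{s^\ast,\frac{1}{2(1+C)}\}$ rather than at $s^\ast$, since $s^\ast$ itself can exceed $\frac{1}{2(1+C)}$ and even $\frac{1}{1+C}$.
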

The MPC horizon condition \eqref{conN} provides a sufficient condition for the $\delta$-Bellman inequality \eqref{deltaclf} to hold. This inequality allows us to compare the value function before and after applying a given control input, which in turn yields a sufficient choice of $\lambda$.
\begin{proof}

\textbf{Step 1. Sufficient MPC horizon to ensure $\delta$-Bellman inequality of $J_N(\bx)$:} To ensure that the $\delta$-Bellman inequality of $J_N(\bx)$ in \eqref{deltaclf} holds, we seek the minimum horizon length $N$ such that \eqref{suffdelta} is satisfied, where \eqref{suffdelta} is obtained by rearranging \eqref{eq:delta-bound}.
\begin{align}
 \frac{C\gamma^N}{v(1-\gamma^N(1+C))} \le 1-\delta.\label{suffdelta}
\end{align}
We next derive a sufficient condition on the horizon length $N$ to satisfy \eqref{suffdelta}.
Since the denominator $1-\gamma^N(1+C)$ is positive when $\gamma^N(1+C) < 1$, 
we restrict to the regime where this condition holds. To further simplify the bound, we upper bound the LHS in \eqref{suffdelta} via, 
\begin{align*}
\frac{C\gamma^N}{1-\gamma^N(1+C)}&=C\gamma^N\left(1+\frac{\gamma^N(1+C)}{1-\gamma^N(1+C)}\right) \\&\le C\big(\gamma^N + 2(1+C)\gamma^{2N}\big),
\end{align*}
provided that $\gamma^N(1+C) \le \frac{1}{2}$. Thus, a sufficient condition for \eqref{suffdelta} is to solve the following quadratic inequality:
{\small{\begin{align}
&\gamma^N+2(1+C)\gamma^{2N}\leq\frac{v(1-\delta)}{C}\label{q0}\\
&\gamma^N\leq -\frac{1}{4(1+C)}+\frac{1}{2}\sqrt{\frac{1}{4(1+C)^2}+\frac{2v(1-\delta)}{C(1+C)}}\label{q1}.
\end{align}}}
Because $\frac{v(1-\delta)}{C}>0$, then the corresponding quadratic equation \eqref{q0} always has 2 roots. Taking logarithms on both sides (recalling that $\gamma \in (0,1)$) and combining with $\gamma^N(1+C) \le \frac{1}{2}$ yields the horizon bound \eqref{conN}. It is worth noting that for certain small values of $\delta$, the right-hand side of \eqref{q1} may exceed $1$. In this case, the constraint $\gamma^N(1+C)\leq \frac{1}{2}$ becomes effective. 

\medskip
\noindent
\textbf{Step 2. Bellman inequality of $J_{\mathrm{ub}}^\lambda$:} In the 2nd part of the proof, we show the $J_{\mathrm{ub}}^\lambda(\bx)$ satisfies the Bellman inequality described in Lemma \ref{lem4}, thus $J_{\pi_\mathcal{D}}\le J_{\mathrm{ub}}^\lambda$.
 From Definition \ref{def2}, we assign the upper bound as $J_{\mathrm{ub}}^\lambda(\bx_0)=\min\limits_{i\in[|\mathcal{D}|]}\delta^{-1}(\tilde{\mathbf{J}}_i+\lambda\|\bx_0-\tilde\bx_i\|_{\mathbb{X}})$. With a slight abuse of notation, we assume $i\in[|\mathcal{D}|]$  minimizes the right hand side of $J_{\mathrm{ub}}^\lambda(\bx_0)$ and denote $\tilde\bx_j=f(\tilde\bx_i,\tilde\bu_i)$. Then, { $\forall\bx_0\in \mathbb{X}$,}
\begin{align*}
&\mathcal{T}^{\pi_\mathcal{D}}J_{\mathrm{ub}}^\lambda(\bx_0)-J_{\mathrm{ub}}^\lambda(\bx_0)=l(\bx_0,\tilde\bu_i)+\gamma J_{\mathrm{ub}}^\lambda(\bx_0')-J_{\mathrm{ub}}^\lambda(\bx_0)\\
&\overset{(viii)}{\leq}l(\tilde\bx_i,\tilde\bu_i)+|l(\tilde\bx_i,\tilde\bu_i)\!-\!l(\bx_0,\tilde\bu_i)|+\gamma J_{\mathrm{ub}}^\lambda(\bx_0')\!-\!J_{\mathrm{ub}}^\lambda(\bx_0)\\
&\overset{(ix)}{\leq}\delta^{-1}(J_N(\tilde\bx_{i})-\gamma J_N(\tilde\bx_j))+\gamma J_{\mathrm{ub}}^\lambda(\bx_0')-J_{\mathrm{ub}}^\lambda(\bx_0)
\\&\quad +|l(\tilde\bx_i,\tilde\bu_i)-l(\bx_0,\tilde\bu_i)|\\&\overset{(x)}{\leq} \delta^{-1}(J_N(\tilde\bx_{i})-\gamma J_N(\tilde\bx_j))+\gamma J_{\mathrm{ub}}^\lambda(\bx_0')-J_{\mathrm{ub}}^\lambda(\bx_0)\\&\quad+
\kappa L_J\|\tilde\bx_i-\bx_0\|_{\mathbb{X}}
\\&\overset{(xi)}{\leq}\delta^{-1}\left[\gamma\lambda\|\bx_0'-\tilde\bx_j\|_{\mathbb{X}}-\lambda\|\bx_0-\tilde\bx_i\|_{\mathbb{X}}\right]+\kappa L_J\|\tilde\bx_i-\bx_0\|_{\mathbb{X}}\\
&\overset{(xii)}{\leq}(\delta^{-1}(\gamma\lambda L_f-\lambda)+\kappa L_J)\|\bx_0-\tilde\bx_i\|_{\mathbb{X}}\leq 0, 
\end{align*}
where $\bx_0'=f(\bx_0,\tilde\bu_i)$ is the next state obtained by applying $\tilde\bu_i$ on $\bx_0$, $\tilde\bu_i$ is the first optimal control by solving \eqref{approx} with initial condition $\tilde\bx_i$, and $i\in\arg\min_{k\in[|\mathcal{D}|]}\delta^{-1}(\tilde{\mathbf{J}}_k+\lambda\|\bx_0-\tilde\bx_k\|_{\mathbb{X}})$. The goal of step $(viii)$ is to replace the first term $l(\bx_0,\tilde\bu_i)$ by $l(\tilde\bx_i,\tilde\bu_i)$. Then, the $\delta$-Bellman inequality \eqref{deltaclf} is applied to obtain $(ix)$. Besides, step $(x)$ uses the Lipschitz condition (Assumptions \ref{a50} and \ref{a54}) to bound the comparison term on $l$.
Finally, $(xi)$ is obtained from the following expansion:
\begin{figure}[H]
\vspace{-7pt}
\centering
\begin{minipage}{0.55\linewidth}
\small
\begin{align*}
J_N(\tilde\bx_i) &= \tilde{\mathbf{J}}_i, \quad J_N(\tilde\bx_j) = \tilde{\mathbf{J}}_j,\\
J_{\mathrm{ub}}^\lambda(\bx_0') &\le \delta^{-1}(\tilde{\mathbf{J}}_j + \lambda \|\bx_0' - \tilde\bx_j\|_\mathbb{X}),\\
J_{\mathrm{ub}}^\lambda(\bx_0) &= \delta^{-1}(\tilde{\mathbf{J}}_i + \lambda \|\bx_0 - \tilde\bx_i\|_\mathbb{X}),
\end{align*}
\end{minipage}
\hfill
\begin{minipage}{0.35\linewidth}
\centering
\includegraphics[width=\linewidth]{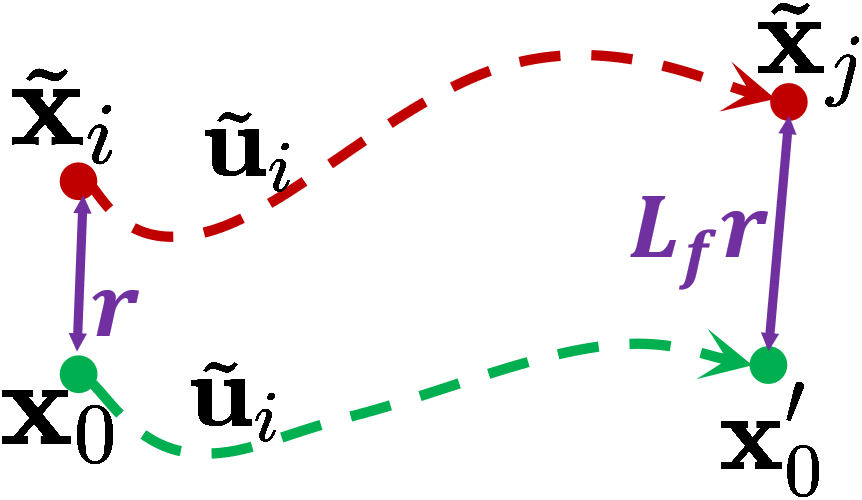}
\end{minipage}
\vspace{-6pt}
\end{figure}
\noindent where $\tilde{\mathbf{J}}_j=J_N(\tilde\bx_j)$ since \eqref{dataset}. Because of Assumption \ref{ass:data_consistency}, $\tilde\bx_j=f(\tilde\bx_i,\tilde\bu_i)$ is definitely in the data set $\mathcal{D}$. Substitute it into $(x)$ to get $(xi)$. Then, by further applying Assumption \ref{a50} (see Figure above, take $r=\|\bx_0-\tilde\bx_i\|_{\mathbb{X}}$), we have,
\begin{align*}
\delta^{-1}(\tilde{\mathbf{J}}_j+\lambda\|\bx_0'-\tilde\bx_j\|_{\mathbb{X}})&\leq \delta^{-1}(\tilde{\mathbf{J}}_j+\lambda L_f\|\bx_{0}-\tilde\bx_i\|_{\mathbb{X}}).
\end{align*}
By substituting it into $(xi)$, we obtain $(xii)$. 


Now, in order to apply Lemma \ref{lem4}, we need,
\begin{align*}
\delta^{-1}(\gamma\lambda L_f-\lambda)+\kappa L_J\leq 0\to
\lambda\geq  \frac{\kappa}{\delta^{-1}(1-\gamma L_f)}L_J,
\end{align*}
which also encodes the additional requirement $\gamma L_f<1$ to keep the denominator positive. 
Therefore, since $J_{\pi_\mathcal{D}}$ is the fixed point of $\mathcal{T}^{\pi_\mathcal{D}}$, we have $\mathcal{T}^{\pi_\mathcal{D}} J_{\pi_\mathcal{D}} = J_{\pi_\mathcal{D}}$. Moreover, $\mathcal{T}^{\pi_\mathcal{D}} J_{\mathrm{ub}}^\lambda \le J_{\mathrm{ub}}^\lambda$. By monotonicity of the Bellman operator (Lemma \ref{lem4}), it follows that $J_{\pi_\mathcal{D}} \le J_{\mathrm{ub}}^\lambda$. This completes the proof.
\end{proof}
Theorem \ref{thm:pi-eval} is closely related to Theorem 1 of \cite{castellano2025nonparametric} and Theorem 1 of \cite{castellano2025data}. In addition to the trajectory-level discrepancy considered there, we account for the \(\delta\)-expansion factor induced by the non-expert setting in \eqref{approx}, which avoids explicit use of the Bellman equation.
To compensate, we require a sufficiently large MPC horizon $N$ so that the $\delta$-Bellman inequality \eqref{deltaclf} holds, 
enabling policy evaluation via data-driven upper bounds. The condition $\gamma L_f < 1$ is a standard contraction requirement in the analysis of error propagation for discounted dynamic programming and Lipschitz systems; see, e.g., \cite{bucsoniu2018continuous,harder2024continuity,mahajan_stochastic_control_notes}.

\subsection{Adaptive Sampling with Performance Guarantees}
This section presents an active sampling mechanism for nonparametric data-driven policies  (Definition \ref{def1}) along with the theoretical guarantees.
We start by providing coverage conditions on the data that ensure a relative optimality gap of at most $\mu$. This will guide the design of our sampling algorithm.
\begin{thm}[Performance guarantees]\label{thm:performance-guarantees}
Let $\mu>0$, $\delta>\frac{1}{1+\mu}$, and $\eta>0$. Suppose that the policy $\pi_{\mathcal{D}}$ satisfies the conditions of Theorem~\ref{thm:pi-eval}, and that the dataset $\mathcal{D}$ satisfies the following coverage condition: for every $\bx\in\mathbb{X}$, there exists $\tilde\bx_i\in\mathcal{D}$ such that
\begin{align}\label{guarant}
\|\bx-\tilde\bx_i\|_{\mathbb{X}}
\le
\frac{(1-\delta^{-1})\tilde{\mathbf{J}}_i+\mu(\tilde{\mathbf{J}}_i+\eta)}
{\delta^{-1}\lambda+(1+\mu)L_J}.
\end{align}
Then the relative performance guarantee \eqref{MPCTag} holds. Moreover, the sample complexity is
\[
\mathcal{O}\!\left(
N_{\mathrm{cover}}\!\left(
\frac{\mu\eta}{\delta^{-1}\lambda+(1+\mu)L_J},
\,\mathbb{X}
\right)
\right),
\]
where $N_{\mathrm{cover}}(r,\mathbb{X})$ denotes the covering number of $\mathbb{X}$ with radius $r$.
\end{thm}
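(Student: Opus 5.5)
The plan is to chain the data-driven upper bound of Theorem~\ref{thm:pi-eval} with the lower approximation of Lemma~\ref{lem-2i} and the Lipschitz continuity of $J_N$ (Assumption~\ref{a54}), and then to rearrange the coverage radius \eqref{guarant}.

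Fix $\bx\in\mathbb{X}$ and let $\tilde\bx_i$ be the covering sample from \eqref{guarant}, with $r:=\|\bx-\tilde\bx_i\|_{\mathbb{X}}$. Theorem~\ref{thm:pi-eval} and the minimum in Definition~\ref{def2} give
\[
J_{\pi_\mathcal{D}}(\bx)\le J_{\mathrm{ub}}^\lambda(\bx)\le \delta^{-1}\big(\tilde{\mathbf{J}}_i+\lambda r\big).
\]
For the lower side I would use $J(\bx)\ge J_N(\bx)$ from Lemma~\ref{lem-2i}, together with $J_N(\bx)\ge J_N(\tilde\bx_i)-L_J r=\tilde{\mathbf{J}}_i-L_J r$ from Assumption~\ref{a54}. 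It therefore suffices to show
\[
\delta^{-1}(\tilde{\mathbf{J}}_i+\lambda r)\le (1+\mu)\big(\tilde{\mathbf{J}}_i-L_J r\big)+\mu\eta,
\]
because the right-hand side is at most $(1+\mu)J(\bx)+\mu\eta$. That bound is equivalent to $J_{\pi_\mathcal{D}}(\bx)-J(\bx)\le \mu(J(\bx)+\eta)$, which is \eqref{MPCTag} since $J(\bx)+\eta>0$. Collecting the terms in $r$, the displayed inequality reads
\[
r\big(\delta^{-1}\lambda+(1+\mu)L_J\big)\le (1-\delta^{-1})\tilde{\mathbf{J}}_i+\mu(\tilde{\mathbf{J}}_i+\eta).
\]
This is exactly \eqref{guarant}, so the guarantee follows pointwise and hence in the supremum. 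The hypothesis $\delta>\frac{1}{1+\mu}$ gives $1+\mu-\delta^{-1}>0$, so the numerator is at least $\mu\eta>0$ and the coverage condition can actually be satisfied.

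For the sample complexity, I would observe that for every sample the radius in \eqref{guarant} is bounded below uniformly:
\[
\frac{(1+\mu-\delta^{-1})\tilde{\mathbf{J}}_i+\mu\eta}{\delta^{-1}\lambda+(1+\mu)L_J}\ \ge\ \frac{\mu\eta}{\delta^{-1}\lambda+(1+\mu)L_J}=:r_\star .
\]
This uses $\tilde{\mathbf{J}}_i\ge 0$, which holds by Assumption~\ref{a501} and $F\ge 0$. Any dataset whose states form an $r_\star$-net of $\mathbb{X}$ therefore satisfies \eqref{guarant}. The initial states alone form such a net if they are taken as centers of a minimal cover, which needs $N_{\mathrm{cover}}(r_\star,\mathbb{X})$ points.

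The main obstacle is reconciling this count with Assumption~\ref{ass:data_consistency}, which requires each sampled state's successor to be in $\mathcal{D}$ and so forces entire trajectories into the dataset. I would handle this by counting trajectories, i.e.\ sampled initial states, rather than stored tuples, and by noting that the extra trajectory points only shrink the minimum in $J_{\mathrm{ub}}^\lambda$, so coverage is preserved. If the tuple count matters, I would bound trajectory lengths by truncating once the trajectory enters a region already covered; otherwise I would state the complexity in terms of trajectories, giving $\mathcal{O}(N_{\mathrm{cover}}(r_\star,\mathbb{X}))$.
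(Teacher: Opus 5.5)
Your proof is correct and follows the paper's argument. You bound $J_{\pi_{\mathcal{D}}}(\mathbf{x})\le\delta^{-1}(\tilde{\mathbf{J}}_i+\lambda r)$ via Theorem~\ref{thm:pi-eval}, use $J(\mathbf{x})\ge J_N(\mathbf{x})\ge\tilde{\mathbf{J}}_i-L_J r$, rearrange into \eqref{guarant}, and set $\tilde{\mathbf{J}}_i=0$ to get the uniform radius. The paper first replaces $J$ by $J_N$ inside the ratio; your direct target $J_{\pi_{\mathcal{D}}}\le(1+\mu)J+\mu\eta$ is equivalent and slightly cleaner.

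Drop the optional truncation idea at the end. Cutting a trajectory off when it enters an already-covered region leaves its last stored tuple without a successor in $\mathcal{D}$. That violates Assumption~\ref{ass:data_consistency}, so Theorem~\ref{thm:pi-eval} no longer applies; counting trajectories, as the paper does, is the right measure.
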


The proof is given in Appendix~IV. Theorem~\ref{thm:performance-guarantees} shows that the required coverage radius determines the achievable relative error: a covering of $\mathbb{X}$ at this resolution ensures \eqref{MPCTag}, and the corresponding sample complexity follows from standard covering arguments. The sample complexity is defined as the number of cells verified in Algorithm~1, which is equal to the number of memorized trajectories. Moreover, increasing $\delta$ (e.g., by increasing the MPC horizon used to generate the data) enlarges the admissible covering radius in~\eqref{guarant}, as illustrated in Table~1.  Importantly, the sample complexity is a worst-case sufficient bound and should not be interpreted as the number of trajectories required in practice. Tightening this bound is a significant direction for future work.

\begin{algorithm}[htb]
\caption{\bf {R}elative \bf{E}rror \bf{M}PC \bf{A}daptive \bf{S}ampling (REMAS) }\label{alg:1}
\begin{tcolorbox}[left=0mm,right=0mm,top=0mm,bottom=0mm]
{\fontsize{9.5}{12}\selectfont{\begin{algorithmic}
\State \textbf{Initialize:} $\mathcal{D}=\emptyset$; $\text{Unverified}=\mathbb{X}$; $\text{Verified}=\emptyset$.
\Repeat
    \For{$\mathbb{X}_k \in \text{Unverified}$}
        \State Get trajectory $\phi_k=\{(\tilde\bx_{t,k},\tilde\bu_{t,k},\tilde{\mathbf{J}}_{t,k})\}_{t=0}$.
        \State Trajectory from solving \eqref{approx} from $\tilde\bx_{0,k}$, center of $\mathbb{X}_k$.
        \State Append the trajectory $\phi_k$ to $\mathcal{D}$.
        \If{$(\tilde\bx_{0,k},\tilde{\mathbf{J}}_{0,k})$ perform over $\mathbb{X}_k$ (\eqref{guarant}, Thm \ref{thm:performance-guarantees})}
            \State $\text{Verified.add}(\mathbb{X}_k)$; $\text{Unverified.remove}(\mathbb{X}_k)$.
        \Else
            \State Split $\mathbb{X}_k$ into $3^n$ cells; add children to Unverified
        \EndIf
    \EndFor
\Until{$\text{Unverified}=\emptyset$}
\end{algorithmic}}}
\end{tcolorbox}
\end{algorithm}

To obtain a dataset $\mathcal{D}$ that satisfies Theorem~\ref{thm:performance-guarantees}, Algorithm~\ref{alg:1} adaptively partitions $\mathbb{X}$ to collect data for the nonparametric policy in Definition~\ref{def1}.
Each cell is represented by its center $\bx_i$, where a non-expert trajectory is generated by \eqref{dataset} and verified using Theorem~\ref{thm:performance-guarantees}. 
Cells that fail verification are recursively refined, while verified cells are retained. 
The resulting dataset ensures the policy satisfies \eqref{MPCTag}.

\section{Rocket landing experiments}\label{sec:experiments}
We consider a 2D planar rocket landing task modeled as a $6$D system. The state is given by $\bx = [p_x,\, p_z,\, v_x,\, v_z,\, \theta,\, \omega]^\top$,
where $p_x,p_z$ denote the horizontal and vertical positions, $v_x,v_z$ the corresponding velocities, $\theta$ the vehicle attitude, and $\omega$ the angular velocity. The control inputs are $\bu=[\tau_1,\tau_2]^\top$, where $\tau_1$ is the thrust magnitude and $\tau_2$ is the attitude torque. The control inputs are constrained as $0 \le \tau_1 \le 20,\,|\tau_2| \le 0.2$.
The continuous--time dynamics are,
\begin{align*}
\dot p_x \!=\! 0.3v_x,\,\dot p_z \!=\! 0.3v_z,\,
\dot v_x &\!=\! \frac{\tau_1}{3m}\sin\theta,\,\dot v_z \!=\! \frac{\tau_1}{3m}\cos\theta - \frac{g}{3}, \\
\dot \theta&\!=\! \omega,\,\dot \omega \!=\! \tau_2/I,
\end{align*}
where $m=1$ denotes the mass, $I=0.1$ the moment of inertia, and $g=9.8$ the gravitational acceleration.
The control objective is to land the rocket at the origin with zero velocity and upright attitude. The stage cost is,\[
l(\bx,\bu)=\bx^\top S \bx + \bu^\top R \bu,\ \bx\in\mathbb{X};\quad l=+\infty\ \text{otherwise},
\]
where $S, R \succ 0$, the discount factor is set to $\gamma = 0.8$, and the terminal cost is chosen as $F = 0$. This configuration empirically yields $C = 2.056$ and $v = 0.232$, which validate Assumption \ref{a52}. State constraints are imposed to represent feasible flight conditions. In particular, the state space $\mathbb{X}$ is defined as
\[
\mathbb{X}=\left\{
\begin{aligned}
|p_x|&\le 1, \quad 0 \le p_z \le 2, \quad |v_x|\le 1,\\
|v_z|&\le 1, \quad |\theta|\le 0.35, \quad |\omega|\le 1
\end{aligned}
\right\}.
\]

 We consider four different values of $N$ to study the relative error performance trend. Table~I reports the values of the $\delta$-Bellman inequality (computed from 
\eqref{eq:delta-bound},  Proposition~\ref{lem-1}) and the number of sampled trajectories (sample complexity) required by Algorithm~\ref{alg:1} for different choices of the horizon $N$, with $\eta = 3$ and $\mu = 1.2$. As indicated by Theorem~\ref{thm:performance-guarantees}, increasing the MPC horizon $N$ drives $\delta$ closer to $1$, which in turn reduces the number of trajectories required by Algorithm~\ref{alg:1} to satisfy~\eqref{guarant}.
\begin{table}[htb]\label{tab1}
\centering
\caption{$\delta$ and trajectory number on the MPC horizon $N$.}
\label{tab:delta_complexity}
\begin{tabular}{c c c c c}
\toprule
$N$ & 15 & 18 & 20 & 27 \\
\midrule
$\delta$ 
& $0.6504$ 
& $0.8309$ 
& $0.894$ 
& $0.9784$ \\
\midrule
Number of trajectories 
& $531441$ 
& $413505$ 
& $354537$ 
& $354537$ \\
\bottomrule
\end{tabular}
\end{table}

The large number of trajectories mainly results from the curse of dimensionality in covering the six-dimensional state space. We apply Algorithm~\ref{alg:1} across different horizon lengths $N$ on $50$ rocket landing tasks that uniformly sampled from $\mathbb{X}$.
\begin{figure}[htb]\label{fig:relative_error}
    \centering
    \includegraphics[width=1\linewidth]{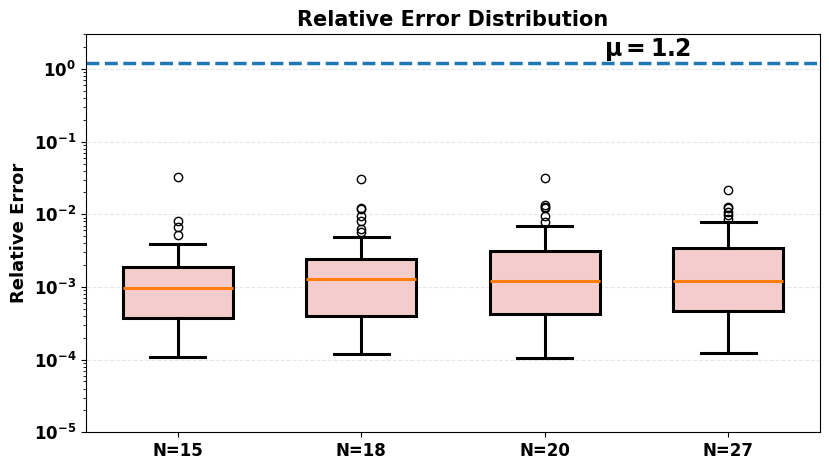}
    \vspace{-3ex}
    \caption{Distribution of the relative error ({\color{orange}{orange}} boxplots) in \eqref{MPCTag} for different choices of $N$ in 50 rocket landing tasks. Each distribution is much lower than the theoretical guarantee ({\color{blue}{blue}} dashed line) with $\mu=1.2$.}
\end{figure}

The relative error performance is shown in Fig.~2. The empirical relative error is lower than the theoretical guarantee claimed in \eqref{MPCTag} to a great extent. Although the mean and variance remain similar, the number of trajectories is significantly reduced as the MPC horizon 
$N$ increases. 
\section{Conclusion}\label{sec:conclusions}
In this paper, we proposed a data-driven nonparametric policy constructed from finite-horizon MPC trajectories without requiring expert demonstrations. 
We established $\mu$-relative performance guarantees with respect to the original infinite-horizon optimal value function and characterized sufficient conditions on the MPC horizon and data coverage to achieve this. 
Future work includes extensions to stochastic systems and improving scalability in high-dimensional settings.
\bibliographystyle{ieeetr}   
\bibliography{references}  
\appendices
\section{On the Bellman operator $\mathcal{T}^\pi$}\label{ap1}
The monotonicity and the fixed point uniqueness of $\mathcal{T}^\pi$ are defined as follows.\\
1. For any policy $\pi$, $\mathcal{T}^\pi$ is monotone, i.e.
\begin{align*}
&\hat{J}_1(\bx) \le \hat{J}_2(\bx),\,\forall \bx\in\mathbb{X},\,\forall \hat{J}_1,\hat{J}_2\in\mathcal{J}\\
\implies&
\bigl(\mathcal{T}^{\pi} \hat{J}_1\bigr)(\bx)
\le
\bigl(\mathcal{T}^{\pi} \hat{J}_2\bigr)(\bx),
\,\forall \bx\in\mathbb{X},\,\forall \hat{J}_1,\hat{J}_2\in\mathcal{J}.
\end{align*}
2. For any policy $\pi$, $J_{\pi}$ is the unique fixed point of $T^{\pi}$:
\[
\lim_{t \to \infty}
\underbrace{\bigl(
\mathcal{T}^{\pi} \circ \mathcal{T}^{\pi} \circ \cdots \circ \mathcal{T}^{\pi}
\bigr)}_{t\ \text{times}}
\, \hat{J}
= J_{\pi},\,\forall \hat{J} \in \mathcal{J}.
\]
where the uniqueness and existence of $J_\pi$ is guaranteed by $\gamma\in (0,1)$. 
\section{Interpretation of Assumption \ref{a52}}\label{ap2}
For the stage cost dominance condition \eqref{scd}, we argue that it is not restrictive. 
Based on \cite[Assumption 11, Remark 12]{rawlings2025nmpc_lecture}, if $0 \in \mathbb{X}$ is the unique equilibrium point, standard MPC assumptions imply the existence of $\alpha_1, \alpha_2 \in \mathcal{K}_{\infty}$ such that
\begin{align*}
l(\bx,u) &\geq \alpha_1(\|\bx\|_{\mathbb{X}}), \quad \forall \bx \in \mathbb{X},\; u \in \mathbb{U},\\
J_N(\bx) &\leq J(\bx) \leq \alpha_2(\|\bx\|_{\mathbb{X}}), \quad \forall \bx \in \mathbb{X}.
\end{align*}
If, in addition, $\alpha_1$ and $\alpha_2$ are of comparable growth, i.e., there exists $v>0$ such that $\alpha_1(r) \geq v\,\alpha_2(r),\, \forall r \geq 0$,
then it follows that
\[
l(\bx,u) \geq \alpha_1(\|\bx\|_{\mathbb{X}}) \geq v\,\alpha_2(\|\bx\|_{\mathbb{X}}) \geq v\,J_N(\bx),\forall \bx \in \mathbb{X},u \in \mathbb{U}
\]
which implies \eqref{scd}.
\section{Proof of Proposition \ref{lem-1}}\label{ap3}
We prove Proposition \ref{lem-1} by two parts. \\
\textbf{Part 1. Multiplicative bound between $J(\bx)$ and $J_N(\bx)$:} Without loss of generality, let $\bx=\bx_0$, for a given input sequence $\bu$, define the truncated cost
$V_N(\bx_0,\bu)=\sum_{t=0}^{N-1}\gamma^t l(\bx_t,\bu_t)$,
and the augmented cost
$V(\bx_0,\bu)=V_N(\bx_0,\bu)+\gamma^N J(\bx_N)$,
which corresponds to the finite-horizon cost with the true tail. Let $\bu^\star$ be the optimal control sequence associated with $J(\bx_0)$ in \eqref{infhorib}. To simplify notation, we denote $\bx_i^F(\bx_0)$ and $\bu_i^F(\bx_0)$ by $\bx_i^F$ and $\bu_i^F$, respectively, for $i=0,\ldots,N-1$. Then,
\begin{align}
&J(\bx_0)-J_N(\bx_0)
\\=& \underbrace{J(\bx_0)-V_N(\bx_0,\bu^\star)}_{(\text{I})}+ \underbrace{V_N(\bx_0,\bu^\star)-J_N(\bx_0)}_{(\text{II})}.\nonumber
\end{align}

Using the definition of $J(\bx_0)$ and Assumption~\ref{a5}, we bound (I) as,
\[
J(\bx_0)-V_N(\bx_0,\bu^\star)=\gamma^NJ(\bx_N)\le \gamma^N J(\bx_0).
\]
On the other hand, to bound (II), we let $\bu^F$ denote the optimal control sequence of $J_N(\bx_0)$. By optimality of $\bu^\star$ in \eqref{infhorib}, we have,
\[V(\bx_0,\bu^F)\ge V(\bx_0,\bu^\star)=J(\bx_0).\]
Then,
\[V_N(\bx_0,\bu^F)+\gamma^NJ(\bx_N^F)\overset{(i)}{{\ge}} V_N(\bx_0,\bu^\star)+\gamma^NJ(\bx_N),\]
  $(i)$ is obtained from expanding both sides from the definition of $V_N$. Next,
\begin{align*}
&V_N(\bx_0,\bu^F)+C\gamma^NJ(\bx_0)\overset{(ii)}\ge V_N(\bx_0,\bu^F)+\gamma^NJ(\bx_N^F)\\&\qquad\qquad\qquad\qquad\qquad\,\,\,\,\,\,\overset{(i)}\ge V_N(\bx_0,\bu^\star)+\gamma^NJ(\bx_N),
\end{align*}
 where applying \eqref{rbt} in Assumption~\ref{a52} leads to $(ii)$. At last, we have,
\begin{align*}
C\gamma^NJ(\bx_0)&\ge V_N(\bx_0,\bu^\star)-V_N(\bx_0,\bu^F)+\gamma^NJ(\bx_N)\\
&\overset{(iii)}{\ge} V_N(\bx_0,\bu^\star)-V_N(\bx_0,\bu^F)\\
&\geq V_N(\bx_0,\bu^\star)-J_N(\bx_0),
\end{align*}
where $(iii)$ is from rearranging $(ii)$ and applying $J(\bx_N)\ge 0$. Hence, (II) is bounded by $C\gamma^NJ(\bx_0)$. Last, combining the upper bound of (I), (II) and rearranging terms, we obtain \eqref{eq:relative-bound}.\\
\textbf{Part 2. $\delta$-Bellman inequality of $J_N(\bx)$:} We consider the Bellman type of inequality on $J_N(\bx_0),\,\bx_0\in\mathbb{X}$ under first optimal control input $u_0^F$, then,
{\small{\begin{align*}
&\gamma J_N(\bx_1^F)-J_N(\bx_0)
\\\overset{(iv)}{\le}&
\Bigg[
\sum_{t=1}^{N-1} \gamma^{t}l(\bx_t^F,\bu_t^F)
+
\gamma^{N}l(\bx_N^F,\bu_N^\star)
+
\gamma^{N+1}F(\bx_{N+1}^\dagger)
\Bigg]\\
&
-
\Bigg[
l(\bx_0,\bu_0^F)
+
\sum_{t=1}^{N-1} \gamma^tl(\bx_t^F,\bu_t^F)
+
\gamma^NF(\bx_{N}^F)
\Bigg]
\\
=&
-l(\bx_0,\bu_0^F)
+
\gamma^Nl(\bx_N^F, \bu_N^\star)
+
\gamma^{N+1}F(\bx_{N+1}^\dagger)
-
\gamma^NF(\bx_{N}^F)\\
\overset{(v)}{\leq}&
-l(\bx_0,\bu_0^F)
+
\gamma^Nl(\bx_N^F, \bu_N^\star)
+
\gamma^{N+1}J(\bx_{N+1}^\dagger)
-
\gamma^N F(\bx_{N}^F)\\
\overset{(vi)}{\leq}&
-l(\bx_0,\bu_0^F)
+
\gamma^NJ(\bx_N^F)\\
\overset{(vii)}{\leq}&
-l(\bx_0,\bu_0^F)
+C\gamma^N J(\bx_{0}),
\end{align*}}}
where $(iv)$ follows by applying the shifted control sequence 
$\{\bu_t^F\}_{t=1}^{N-1}$ obtained from \eqref{approx} at $\bx_0$ to the 
state $\bx_1^F$, and appending the infinite-horizon optimal input $\bu_N^\star$ to get $\bx_{N+1}^\dagger$. $(v)$ comes from the fact that $F(\bx_{N+1}^\dagger)\leq J(\bx_{N+1}^\dagger)$. $(vi)$ is obtained from throw $F(\bx_N^F)\geq0$ away and have $l(\bx_N^F, \bu_N^\star)+\gamma J(\bx_{N+1}^\dagger)=J(\bx_N^F)$ by Bellman equation. Finally, by applying $J(\bx_N^F)\leq CJ(\bx_0)$, we obtain $(vii)$. \\
Considering the $v$-relative lower boundness (Assumption \ref{a52}) and changing $J(\bx_0)$ by its upper bound $\frac{J_N(\bx_0)}{1-\gamma^N(1+C)}$, we have,
{\small{\begin{align*}
&\left(1
+\frac{C\gamma^N}{1-\gamma^N(1+C)} \right)J_N(\bx_{0})-l(\bx_0,\bu_0^F)\geq \gamma J_N(\bx_1^F)\\
&\left(1
+\frac{C\gamma^N}{1-\gamma^N(1+C)} \right)J_N(\bx_{0})-\gamma J_N(\bx_1^F)\geq l(\bx_0,\bu_0^F)\\
&\left(1-\frac{C\gamma^N}{v(1-\gamma^N(1+C))}\right)l(\bx_0,\bu_0^F)\leq J_N(\bx_{0})-\gamma J_N(\bx_1^F).
\end{align*}}}
Thus by assigning $\delta\leq1-\frac{C\gamma^N}{v(1-\gamma^N(1+C))}$, we finished the proof. It is remarkable that we need $N> \frac{\log\left({1+C}\right)}{\log\left(\frac{1}{\gamma}\right)}$ to ensure $1-\gamma^N(1+C)>0$ to let the $\delta$-bound effective.
\section{Proof of Theorem \ref{thm:performance-guarantees}}\label{ap4}
From $J(\bx)\geq J_N(\bx),\,\forall \bx\in\mathbb{X}$ (Lemma \ref{lem-2i}), we may bound the objective as
\[
\sup_{\bx\in\mathbb{X}}\frac{J_{\pi_\mathcal{D}}(\bx)-J(\bx)}{J(\bx)+\eta}
\leq 
\sup_{\bx\in\mathbb{X}}\frac{J_{\pi_\mathcal{D}}(\bx)-J_N(\bx)}{J_N(\bx)+\eta} .
\]
Hence, a sufficient condition on radius $r_{\bx}=\|\bx-\tilde\bx_i\|,\,\exists i\in[|\mathcal{D}|]$ is described as follows.
\begin{align*}
J_{\pi_\mathcal{D}}(\bx)-J_N(\bx)
&=\underbrace{J_{\pi_\mathcal{D}}(\bx)-\tilde{\mathbf{J}}_i}_{(\mathrm{III})}
+\underbrace{\tilde{\mathbf{J}}_i-J_N(\bx)}_{(\mathrm{IV})}\\
&\overset{(xiii)}{\leq}
(\delta^{-1}-1)\tilde{\mathbf{J}}_i+\delta^{-1}\lambda r_\bx+ L_J r_\bx\\
&{\color{red}{\overset{(xiv)}{\leq}}}
\mu(\tilde{\mathbf{J}}_i-L_Jr_\bx+\eta)
\overset{(xv)}{\leq}
\mu(J_N(\bx)+\eta),
\end{align*}  
The term $(\mathrm{III})$ follows from Theorem \ref{thm2}.  
The term $(\mathrm{IV})$ is upper bounded by the Lipschitz continuity of $J_N(\bx)$ (Assumption \ref{a54}), which yields $(xiii)$.  
{\color{red}{Inequality $(xiv)$ is the condition we want to enforce}}, and $(xv)$ again follows from the Lipschitz continuity in Assumption \ref{a54}. To guarantee {\color{red}{$(xiv)$}}, given $1-\delta^{-1}+\mu\geq 0$, the covering radius $r_\bx$ should satisfy,
\begin{align*}
(\delta^{-1}-1)\tilde{\mathbf{J}}_i+\delta^{-1}\lambda r_\bx+ L_J r_\bx
&\leq \mu(\tilde{\mathbf{J}}_i-L_Jr_\bx+\eta)\\
\delta^{-1}\lambda r_\bx+(1+\mu) L_J r_\bx
&\leq (1-\delta^{-1}+\mu)\tilde{\mathbf{J}}_i+\mu \eta\\
r_\bx
&\leq
\frac{(1-\delta^{-1})\tilde{\mathbf{J}}_i+\mu(\tilde{\mathbf{J}}_i+\eta)}{\delta^{-1}\lambda+(1+\mu) L_J},
\end{align*}

Therefore, by assigning $\tilde{\mathbf{J}}_i=0$ from $\inf_{\bx\in\mathbb{X}}J_N(\bx)\geq0$ (Assumption \ref{a501}), the sample complexity of Algorithm \ref{alg:1} is
\[
\mathcal{O}\!\left(
N_{\text{cover}}\!\left(
\frac{\mu \eta}{\delta^{-1}\lambda+(1+\mu)L_J},\,\mathbb{X}
\right)
\right).
\]
\end{document}